\documentclass[11pt]{amsart}
\usepackage[english]{babel}
\usepackage[latin1]{inputenc}
\usepackage[dvips,final]{graphics}
\usepackage{amsmath,amsfonts,amssymb,amsthm,amscd,array,stmaryrd,mathrsfs}
\usepackage{pstricks}
 \usepackage[all]{xy}
\usepackage{graphicx}
\usepackage{color}              % Need the color package
\usepackage{epsfig}
\usepackage{psfrag}
\usepackage{epstopdf}

\usepackage{textcomp}

\let\ssection=\section
\renewcommand{\section}{\setcounter{equation}{0}\ssection}

\setlength{\textwidth}{15truecm}
\setlength{\hoffset}{-1truecm}

% ----------------------------------------------------------------
\vfuzz2pt % Don't report over-full v-boxes if over-edge is small
\hfuzz2pt % Don't report over-full h-boxes if over-edge is small
% THEOREMS -------------------------------------------------------
\theoremstyle{plain}
\newtheorem{thm}{Theorem}%[section]
\newtheorem{lem}{Lemma}[section]

\newtheorem{prop}[lem]{Proposition}

\theoremstyle{definition}
\newtheorem{defi}[lem]{Definition}
\newtheorem{rem}[lem]{Remark}
\newtheorem{ex}[lem]{Example}

% MATH -----------------------------------------------------------
%\newcommand{\norm}[1]{\left\Vert#1\right\Vert}
%\newcommand{\abs}[1]{\left\vert#1\right\vert}
%\newcommand{\set}[1]{\left\{#1\right\}}
%\newcommand{\Real}{\mathbb R}
%\newcommand{\eps}{\varepsilon}
%\newcommand{\To}{\longrightarrow}
%\newcommand{\BX}{\mathbf{B}(X)}
%\newcommand{\A}{\mathcal{A}}

\newcommand{\R}{\mathbb{R}}
\newcommand{\Z}{\mathbb{Z}}
\newcommand{\C}{\mathbb{C}}

\newcommand{\bbH}{\mathbb{H}}

\newcommand{\bbO}{\mathbb{O}}

\newcommand{\Zn}{(\Z/2\Z)^n}

\newcommand{\cN}{\mathcal{N}}

\newcommand{\Span}{\mathrm{Span}}
\newcommand{\Cl}{\mathit{C}\ell}

\newcommand{\gS}{\mathfrak{S}}

\newcommand{\card}{\textup{card}}
\newcommand{\Id}{\textup{Id}}

\def\a{\alpha}

\def\d{\delta}

\def\g{\gamma}

\def\r{\rho}

\def\vfi{\varphi}

\begin{document}

\title[New solutions to the Hurwitz problem]{New solutions to the Hurwitz problem\\ on square identities}

\author{
Anna Lenzhen,
Sophie Morier-Genoud
\and
Valentin Ovsienko}

\address{
Anna Lenzhen,
Universit\'e de Rennes 1,
Campus de Beaulieu, 35042 Rennes Cedex France}
\email{anna.lenzhen@univ-rennes1.fr}

\address{Sophie Morier-Genoud,
Institut de Math\'ematiques de Jussieu
UMR 7586
Universit\'e Pierre et Marie Curie,
4 place Jussieu, case 247,
75252 Paris Cedex 05
}
\email{sophiemg@math.jussieu.fr}

\address{
Valentin Ovsienko,
CNRS,
Institut Camille Jordan,
Universit\'e Claude Bernard Lyon~1,
43 boulevard du 11 novembre 1918,
69622 Villeurbanne cedex,
France}

\email{ovsienko@math.univ-lyon1.fr}

\date{}

\begin{abstract}
The Hurwitz problem of composition of quadratic forms,
or of ``sum of squares identity'' is tackled with the help of a
particular class of $\Zn$-graded non-associative algebras
generalizing the octonions.
This method provides explicit formulas for the
classical Hurwitz-Radon identities and leads to new solutions
in a neighborhood of the Hurwitz-Radon identities.
\end{abstract}

\maketitle

\thispagestyle{empty}

%\vspace{-1cm}

%\newpage

%\tableofcontents

%%%%%%%%%%%%%%%%%%%%%%%%%%%%%%%%%
\section{Introduction}
%%%%%%%%%%%%%%%%%%%%%%%%%%%%%%%%%

A square identity of size $[r,s,N]$ is an identity
\begin{equation}
\label{rsN}
(a_1^2+\cdots{}+a_r^2)\,(b_1^2+\cdots{}+b_s^2)
=c_1^2+\cdots{}+c_N^2,
\end{equation}
where 
$a_1,\ldots,a_r$ and $b_1,\ldots,b_s$ are independent indeterminates and each
$c_i$ is a bilinear form in  $(a_1,\ldots,a_r)$ and $(b_1,\ldots,b_s)$ with integer coefficients.
The problem to determine all triples of positive integers $r,s,N$
for which there exists a square identity of size $[r,s,N]$ was formulated by Hurwitz \cite{Hur1}
and remains widely open.
Such triples are called \textit{admissible}.
An admissible triple $[r,s,N]$ is \textit{optimal} if $r$ and~$s$
cannot be increased and $N$ cannot be decreased.
This problem has  various applications and interpretations in  
number theory, linear algebra, geometry and topology, see \cite{Sha},\cite{Squa} for a complete overview.

The simplest example is called Brahmagupta's two-square identity
$$
(a_1^2+a_2^2)\,(b_1^2+b_2^2)=
(a_1b_1-a_2b_2)^2+(a_1b_2+a_2b_1)^2,
$$
that can be easily proved algebraically, but can also be understood as equality between 
the products of the norms of two complex numbers
and the norm of their product:
\begin{equation}
\nonumber
\cN(a)\,\cN(b)=\cN(a\cdot b).
\end{equation}
The similar equalities in the algebras of quaternions $\bbH$ and of octonions $\bbO$
coincide with Euler's four-square identity (1748)
and Degen's eight-square identity (1818), respectively.
We refer to \cite{ConSmi} for the history of these identities and their meaning.

In 1898, Hurwitz proved his celebrated theorem \cite{Hur1} stating that $N$-square identities
exist only for $N=1,2,4$ and $8$.
Actually, Hurwitz proved a stronger ``non-existence'' result in the case where
$c_i$ are bilinear forms with complex coefficients.
In 1918 (published in 1922), Hurwitz \cite{Hur} generalized his theorem
and established the largest integer $r=\r(N)$ 
for which there exists a $[r,N,N]$-identity with complex coefficients. 
In 1922, Radon  \cite{Rad} independently solved a similar problem in the case of real coefficients.
The result is the same in both cases:
write $N$ in a form $N=2^n(2k+1)$, then
$$
\r(N)=\left\{ \begin{array}{lcll}
2n+1, \quad&n\equiv& 0 &\mod 4\\
2n, \quad&n\equiv& 1,2 &\mod 4\\
2n+2, & n\equiv &3 &\mod 4.
\end{array}
\right.
$$
The function $\r$ is called the \textit{Hurwitz-Radon function}
(this function also appears in topology~\cite{Ada} and linear algebra \cite{ALP}).
Note that, Hurwitz's constructive proof leads to $[r,N,N]$-identities with 
integer coefficients.
According to \cite{Gab},
this even implies the existence of such identities with 
coefficients in $\{-1,0,1\}$.

A nice feature of the Hurwitz-Radon identities is
their relation to Clifford algebras.
Every square identity of size $[r+1,N,N]$ defines a representation of
the real Clifford algebra $\Cl_{0,r}$ by $N\times{}N$-real matrices.
In this representation the matrices corresponding to the generators of $\Cl_{0,r}$
are orthogonal and skew-symmetric matrices with integer coefficients.
Conversely, every such representation of $\Cl_{0,r}$ defines an
$[r+1,N,N]$-identity.
In his original work \cite{Hur1}, Hurwitz implicitly constructed Clifford algebras
and their representations.
However, this was formulated in an explicit and conceptual way much later
(see \cite{Sha} for more details).

Besides the Hurwitz-Radon formula, a number of solutions to the Hurwitz problem
are known.
We refer to  \cite{Ade},\cite{Lam1},\cite{Lam2},\cite{LY},\cite{Sanch},\cite{Yiu},
for concrete examples of admissible triples $[r,s,N]$
and various methods to prove their optimality.
The complete list of the admissible triples with $N\leq32$ is given in \cite{Yiu1}.

In \cite{Yuz} and \cite{Yuz2}, Yuzvinsky introduced a new approach
to construct square identities, see also \cite{LS}.
His method is based on \textit{monomial pairings} on the group algebra over $\Zn$.
In particular,
existence of infinite series of identities of size
\begin{equation}
\label{YuzEq}
[2n+2, 2^n-\vfi(n),2^n],
\end{equation}
 where the function $\vfi(n)$ is given by the binomial coefficients
\begin{equation}
\nonumber
\vfi(n)={n\choose n/2},\;
n\equiv 0\mod 4,
\qquad
\vfi(n)=2\,{n-1\choose (n-1)/2},\;
n\equiv 1 \mod 4,
\end{equation}
was established.
The above values of $[r,s,N]$ are called Yuzvinsky-Lam-Smith formulas (see \cite{Sha}).
Another series of triples:
\begin{equation}
\label{Angel}
\left[2n\;,\; 2^n-2n\;, \;2^n-2\right]
\end{equation}
can be found in \cite{Ang}.
To the best of our knowledge, the Hurwitz-Radon formula,
the expressions (\ref{YuzEq}) and (\ref{Angel}) as well as their direct consequences, are
the only known infinite series of solutions to the Hurwitz problem.

In the present paper, we obtain infinite families of admissible
triples $[r,s,N]$  similar to~\eqref{YuzEq}. 
Our method is similar to the Yuzvinsky method, but the monomial pairings
we consider are different.
We use a series of algebras constructed in the recent work \cite{MGO}.
These algebras generalize the classical algebra of octonions  and 
Clifford algebras at the same time. 
As an application of these algebras, explicit expressions
for the  Hurwitz-Radon identities were provided in~\cite{MGO}. Here we extend the method to 
obtain more general identities. 
As well as  \eqref{YuzEq}, the triples that we obtain are close to the Hurwitz-Radon triple.
Whereas \eqref{YuzEq} is obtained from
$[\r(2^n),\,2^n,\,2^n]$ by increasing the first entry, $\r(2^n)$,
we will decrease the third entry, $2^n$.
Let us also mention that 
our formulas include \eqref{Angel} as the simplest case.

Let us also mention that optimality of  $[r,s,N]$ is often a difficult problem.
For instance, it is not known whether  (\ref{YuzEq}) is optimal.
We show that our formulas generate several optimal (and several 
best known) values $[r,s,N]$.
However, unfortunately we do not have any result on optimality of our formulas.

The paper is organized as follows.
In Section \ref{MainRS}, we formulate the main results.
In Section~\ref{TwiS}, we introduce the general notion of twisted group algebra
over the abelian group~$\Zn$.
In Section \ref{NeshAlgSec}, we recall the definition of the algebras $\bbO_n$
and introduce the notion of Hurwitzian subset of $\Zn$.
Proofs of the main results are given in Section \ref{proofSec}.

%%%%%%%%%%%%%%%%%%%%%%%%%%%%%%%%%
%%%%%%%%%%%%%%%%%%%%%%%%%%%%%%%%%
\section{The main results}\label{MainRS}
%%%%%%%%%%%%%%%%%%%%%%%%%%%%%%%%%
%%%%%%%%%%%%%%%%%%%%%%%%%%%%%%%%%

The first family of admissible triples that we obtain
depends on two parameters, $\ell$ and~$k$.

\begin{thm}
\label{FirstMain}
(i)
For every $n$, there exist square identities of size $[r,s,N]$ with
\begin{equation}
\label{step}
\left\{
\begin{array}{rcl}
r&=&2n,\\[6pt]
s&=&2^n-2\left({k-1\choose2}+\ell+1\right)n +4\,{k\choose3}+2\,k\,\ell,\\[12pt]
N&=&2^n-2\left({k-1\choose2}+\ell+1\right),
\end{array}
\right.
\end{equation}
for all $1\leq\ell<k\leq{}n$.

(ii) If $n\equiv3\mod\,4$, then there are also identities of size
$[r+2,s-2,N]$, where $r,s,N$ are as in \text{\eqref{step}}.
\end{thm}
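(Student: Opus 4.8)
The plan is to realize each triple as the ``size'' of a composition identity carried by a subspace of the twisted group algebra $\bbO_n$, and then to \emph{shrink} the target group $\Zn$ in a controlled way. Recall that in $\bbO_n$ the product of basis elements has the monomial form $u_x\cdot u_y=(-1)^{f(x,y)}u_{x+y}$ for the twisting function $f$ of the algebra. Writing $a=\sum_{x\in A}a_x u_x$ and $b=\sum_{y\in B}b_y u_y$, the coefficient of $u_z$ in $a\cdot b$ is $c_z=\sum_{x+y=z}(-1)^{f(x,y)}a_xb_y$, and in $\sum_z c_z^2$ the diagonal contributes $\bigl(\sum_x a_x^2\bigr)\bigl(\sum_y b_y^2\bigr)$ while each off-diagonal monomial $a_xa_{x'}b_yb_{y'}$ coming from a collision $x+y=x'+y'$ occurs exactly twice, once in $c_{x+y}$ and once in $c_{x+y'}$ (using $x+y'=x'+y$ in $\Zn$). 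Hence $\cN(a)\cN(b)=\sum_z c_z^2$ is equivalent to the ``rectangle condition'' $f(x,y)+f(x',y')+f(x,y')+f(x',y)\equiv 1\pmod 2$ for every such collision. I take this (restricted to all $x,x'\in A$ and all $y,y'\in\Zn$) to be the defining property of the Hurwitzian subsets of Section~\ref{NeshAlgSec}, and I would begin by fixing a Hurwitzian set $A$ of cardinality $r=2n$, which yields the base identity $[2n,2^n,2^n]$.

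To obtain $N<2^n$ I would delete a set $D\subseteq\Zn$ from the target, setting $C=\Zn\setminus D$ and $N=2^n-|D|$. Deleting the coordinate $c_z$ for $z\in D$ is legitimate only if $c_z$ vanishes identically; since the $a_x$ are independent and $x\mapsto x+z$ is injective, this forces $b_{y}=0$ for every $y\in A+z$. Thus the surviving $b$-variables are indexed by $B=\Zn\setminus(A+D)$, giving $s=2^n-|A+D|$. With this choice one checks that $A+B\subseteq\Zn\setminus D$ (if $x+y\in D$ with $x\in A$, $y\in B$, then $y=x+(x+y)\in A+D$, a contradiction), so the product indeed lands in the truncated target; the rectangle condition, being a condition on pairs, is inherited by the subsets $A,B,C$, so the truncated identity is valid. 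The construction therefore reduces to choosing $D$, governed by $\ell<k\le n$, so that $|D|=2\bigl(\binom{k-1}{2}+\ell+1\bigr)$ and $|A+D|$ equals the quantity subtracted from $2^n$ in the formula for $s$ in \eqref{step}.

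The main obstacle is the exact evaluation of the sumset size $|A+D|$. Writing $A+D=\bigcup_{z\in D}(A+z)$ and applying inclusion--exclusion, the leading term $\sum_{z\in D}|A+z|=2n\,|D|$ is corrected by the pairwise and higher overlaps $(A+z)\cap(A+z')$; it is precisely these overlaps, dictated by the $\Zn$-grading of $A$ and the combinatorial placement of $D$, that should produce the cubic correction $4\binom{k}{3}$ and the mixed term $2k\ell$ appearing in \eqref{step}. I would organise $D$ along the weight layers of the cube $\Zn$ so that the intersections can be enumerated layer by layer, and verify simultaneously that $A+B$ fills \emph{all} of $\Zn\setminus D$, so that the resulting identity has size exactly $[2n,s,N]$ rather than something smaller. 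Finally, for part~(ii), when $n\equiv 3\bmod 4$ the Hurwitz--Radon value is $\r(2^n)=2n+2$, so a Hurwitzian set of cardinality $2n+2$ is available; enlarging $A$ by two elements leaves $D$ (hence $N$) unchanged and, by the same overlap bookkeeping, increases $|A+D|$ by exactly two, trading the two extra $a$-variables for two fewer $b$-variables and yielding $[r+2,s-2,N]$.
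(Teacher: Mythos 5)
Your overall framework is the same as the paper's: fix a Hurwitzian set $A=H$ of cardinality $2n$ (or $2n+2$ when $n\equiv3\bmod4$), choose a deletion set $D$ in the target, observe that killing the coordinates $c_z$ for $z\in D$ forces $B=\Zn\setminus(A+D)$, and note that multiplicativity of $(A,B)$ is inherited from that of $(A,\Zn)$, so everything reduces to computing $|D|$ and $|A+D|$. That reduction is correct, and your remark that a possibly smaller sumset $A+B\subsetneq\Zn\setminus D$ is harmless for an existence statement is also fine. The gap is that the proof stops exactly where the theorem's content begins: you never specify $D$, and you never evaluate $|A+D|$ --- you only say that inclusion--exclusion ``should produce'' the terms $4\binom{k}{3}$ and $2k\ell$. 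The paper's choice is concrete: with $e_{ij}=e_i+e_j$ and $\overline{e_{ij}}=\overline{e_0}+e_{ij}$, take $D=\{e_0,\overline{e_0}\}\cup\{e_{ij},\overline{e_{ij}}\}$ over the first $\binom{k-1}{2}+\ell$ pairs $(i,j)$ in a fixed linear order; then use $H+e_0=H+\overline{e_0}=H$ and $H+e_{ij}=H+\overline{e_{ij}}$, compute $H\cap(H+e_{ij})=\{e_i,e_j,\overline{e_i},\overline{e_j}\}$, and run an induction over the pairs to count the mutual overlaps of the translates $H+e_{ij}$; this is where the cubic correction comes from, and it is not a routine verification. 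Note also that organising $D$ purely ``along weight layers,'' as you propose, only recovers the border case $(\ell,k)=(n-1,n)$ (all of weights $0,2,n-2,n$) or else the construction of Theorem~\ref{SecMain}; the general formula \eqref{step} requires selecting individual pairs inside the weight-$2$ layer.

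The second concrete problem is part (ii). You assert that enlarging $A$ by two elements leaves the overlap bookkeeping intact and increases $|A+D|$ by exactly two. For the deletion set above this is not what happens: when $H$ is enlarged by $e_0,\overline{e_0}$, each translate $H+e_{ij}$ also gains the two elements $e_{ij},\overline{e_{ij}}$, which for $n>4$ lie outside $H$ and outside the previously counted translates, so already for a single pair $(i,j)$ the removed set grows by four, not two. The intersection pattern among the translates therefore has to be recomputed from scratch in the $n\equiv3\bmod4$ case, and ``the same bookkeeping'' does not carry over; as it stands this part of your argument does not establish the claimed value of $s-2$.
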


For small values of $\ell$ and $k$, formula  \eqref{step} gives the following triples:
$$
\left[2n,\,2^n-4n+4,\,2^n-4\right],
\qquad
\left[2n,\,2^n-6n+10,\,2^n-6 \right],
\qquad
\left[2n,\,2^n-8n+16,\,2^n-8 \right].
$$
We conjecture that the values (\ref{step}) are optimal if $n$ is sufficiently large.

Let us give a few examples of concrete numeric values.

\begin{ex}
{\rm
(a)
For $n=5$, we obtain the following admissible triples:
$[10,12,26]$, $[10,16,28]$;
the latter is optimal (as proved in \cite{Yiu1}).

(b)
For $n=6$, we obtain $[12,38,58],\;[12,44,60]$ that correspond to the best known values,
(cf,~\cite{Sha}, pp.292--293).

(c)
For $n=7$, we obtain the values $[14,88,120],\;[14,96,122],\;[14,104,124]$ from (\ref{step}).
Furthermore, part (ii) of the theorem leads to 
$[16,86,120]$, $[16,94,122]$, $[16,102,124]$.

(d)
For $n=8$, we obtain
$[16,218,250]$, $[16,228,252]$.
}
\end{ex}

The second family of identities is parametrized by
an integer $k<\frac{n}{2}$.
The values of $s$ and $N$ are given in terms of sums of binomial coefficients.
\goodbreak

\begin{thm}
\label{SecMain}
(i)
If $n\equiv 1 \mod 4$, then there exist square identities of size $[r,s,N]$,
where
\begin{equation}
\label{stepDva}
r=2n,
\qquad
s= \sum_{0\leq i \leq k-1}2\,{n\choose m-i},
\qquad
N=\sum_{0\leq i \leq k}2\,{n\choose m-i},
\end{equation}
where $m=(n-1)/2$ and $1\leq{}k\leq{}m$.

(ii)
If $n\equiv 3 \mod 4$, then there exist square identities of size $[r+2,s,N]$,
where $r,s$ and~$N$ are as in \text{\eqref{stepDva}}.
\end{thm}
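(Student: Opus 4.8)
The plan is to follow the Yuzvinsky-type strategy, but with the monomial pairing coming from the algebra $\bbO_n$ of \cite{MGO}. Writing the product of homogeneous basis elements as $u_x\cdot u_y=f(x,y)\,u_{x+y}$ with $f(x,y)\in\{\pm1\}$, and taking $\cN$ to be the sum of squares of the coordinates, I would first expand $\cN(a\cdot b)$ for $a=\sum_{x\in A}a_xu_x$ and $b=\sum_{y\in B}b_yu_y$. The diagonal contributions reproduce $\sum a_x^2b_y^2=\cN(a)\cN(b)$ automatically since $f^2=1$, so $\cN(a)\cN(b)=\cN(a\cdot b)$ holds as a polynomial identity if and only if every off-diagonal term cancels, which amounts to the sign relation
\[
f(x,y)\,f(x',y)\,f(x,y')\,f(x',y')=-1
\]
for all $x\neq x'$ in $A$ and $y\neq y'$ in $B$ with $x+x'=y+y'$. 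This is the criterion $(\star)$ I would isolate first; it reduces the whole problem to choosing subsets $A,B\subset\Zn$ for which $(\star)$ always holds, the resulting identity then having size $[\,|A|,\,|B|,\,|A+B|\,]$, where $A+B=\{x+y\}$ is the support of $a\cdot b$.

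Next I would make the explicit choices. Put $m=(n-1)/2$ and let $B=H$ be the central band of Hamming weights, $H=\{x\in\Zn:\ m-k+1\leq|x|\leq m+k\}$. Using $\binom nw=\binom n{n-w}$ together with $n-(m-i)=m+1+i$, the band splits into complementary pairs of weights and
\[
|H|=\sum_{w=m-k+1}^{m+k}\binom{n}{w}=\sum_{i=0}^{k-1}\left(\binom{n}{m-i}+\binom{n}{m+1+i}\right)=\sum_{i=0}^{k-1}2\binom{n}{m-i}=s.
\]
For $A$ I would take the Hurwitz--Radon generators of $\bbO_n$, which all have Hamming weight $1$ or $n-1$ (the latter being complements of the former); adding any such element to an element of $H$ changes the weight by exactly $\pm1$ (using that $x\mapsto x+\mathbf 1$ preserves the symmetric band). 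Hence $A+H$ is the enlarged band $\{x:\ m-k\leq|x|\leq m+k+1\}$, and the same bookkeeping gives $|A+H|=\sum_{i=0}^{k}2\binom{n}{m-i}=N$; the constraint $1\le k\le m$ is exactly what keeps both bands inside $\{0,\dots,n\}$, and one checks each weight of the enlarged band is attained so the support has size $N$. For $n\equiv1\bmod4$ one has $|A|=\r(2^n)=2n=r$, which is part (i).

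The main work, and the step I expect to be the real obstacle, is verifying $(\star)$ for this $A$ and $H$. Because the generators have weight $1$ or $n-1$, the constraint $x+x'=y+y'$ forces $y,y'$ to differ either in exactly two coordinates or in all but two, and one must show the explicit pairing $f$ of $\bbO_n$ yields $-1$ in every such configuration. I would organise this by the weight type of the pair $\{x,x'\}$ (both weight $1$; both weight $n-1$; one of each), reducing each case to a short computation with $f$ restricted to the central band. This is where the arithmetic of $n$ enters: the pairing of $\bbO_n$ carries a global sign governed by $n\bmod 4$ (from the square and (anti)commutation of the top element $\mathbf 1$), and $(\star)$ only closes up under the stated congruence.

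Finally, for part (ii) I would use that when $n\equiv3\bmod4$ the Hurwitz--Radon count rises to $\r(2^n)=2n+2$, so two further admissible generators become available (related to the unit and the top element $\mathbf 1$). Both preserve the symmetric band $H$ under addition, hence enlarge neither $H$ nor $A+H$; adjoining them raises $|A|$ from $2n$ to $2n+2$ while leaving $s$ and $N$ unchanged, giving the size $[r+2,s,N]$. The point to verify is that these two elements satisfy $(\star)$ against every element of $A$ and $H$ — precisely the computation that fails for $n\equiv1$ and succeeds for $n\equiv3$ — so the congruence hypothesis is again used in an essential way.
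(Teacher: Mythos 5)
Your setup and bookkeeping match the paper's proof: you use the same multiplicativity criterion (the four-term sign condition, Proposition \ref{propmult} in the paper), the same two sets (the $2n$ Hurwitz--Radon elements of weight $1$ and $n-1$, paired against the central band of Hamming weights $m-k+1,\dots,m+k$), and the same binomial-symmetry count of $s$ and $N$. But the one step that actually constitutes the theorem --- verifying the sign condition $(\star)$ for every quadruple $x\neq x'$ in $A$, $y\neq y'$ in $B$ with $x+x'=y+y'$ --- is left as a plan (``the step I expect to be the real obstacle\dots I would organise this by the weight type\dots''), not carried out. As written, the proof is incomplete precisely where the content lies.

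The missing ingredient is the generating-function property of $f_{\bbO}$ (Proposition \ref{propalpha}(ii) of the paper): whenever $x+y+z+t=0$,
\begin{equation*}
f_{\bbO}(x,y)+f_{\bbO}(z,t)+f_{\bbO}(x,t)+f_{\bbO}(z,y)=\a_{\bbO}(x+z),
\end{equation*}
where $\a_{\bbO}(w)$ depends only on $|w|\bmod 4$ and vanishes exactly when $|w|\equiv 0\pmod 4$. This collapses your anticipated case analysis: the left-hand side of $(\star)$ depends only on $x+x'$, so the band $B$ plays no role whatsoever in the verification, and one only needs $|x+x'|\not\equiv 0\pmod 4$ for distinct $x,x'$ in the Hurwitzian set. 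For $A=\{e_i,\overline{e_i}\}$ the pairwise sums have weight $2$, $n-2$ or $n$, none divisible by $4$ for odd $n$; adjoining $e_0,\overline{e_0}$ introduces sums of weight $1$, $n-1$, $n$, and $n-1\equiv 0\pmod 4$ exactly when $n\equiv 1$, which is why the two extra generators are admissible only for $n\equiv 3\pmod 4$ (your intuition on this last point is correct). Without this identity, your proposed direct computation of $f$ on configurations involving band elements would be substantially more laborious and is not obviously tractable in the form you describe; with it, the proof reduces to the paper's one-line weight check plus the counting you have already done.
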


Let us give the first and simplest example.
In the case $k=1$, the formula (\ref{stepDva}) reads
$$
\Big[ 2n\; ,\; 2{n\choose m}\; ,\; 2{{n+1}\choose m}\Big]\;,
\qquad
\Big[ 2n+2\; ,\; 2{n\choose m}\; ,\; 2{{n+1}\choose m}\Big],
$$
for $n\equiv 1 \mod 4$ and $n\equiv 3 \mod 4$, respectively.

\begin{ex}
{\rm
We obtain the values $[10,20,30]\;,\; [16,70,112]\;, [18,252,420]$.
The second one strengthens the
known triple $[16,64,112]$, see~\cite{Sha}, p.294.
}
\end{ex}

%%%%%%%%%%%%%%%%%%%%%%%%%%%%%%%%%
%%%%%%%%%%%%%%%%%%%%%%%%%%%%%%%%%
\section{Twisted group algebras over $(\Z/2\Z)^n$}\label{TwiS}
%%%%%%%%%%%%%%%%%%%%%%%%%%%%%%%%%
%%%%%%%%%%%%%%%%%%%%%%%%%%%%%%%%%

In this section, we recall the definition of twisted group algebras
over the abelian group~$\Zn$.
These algebras include  Clifford algebras and the algebra of octonions.
We then explain Yuzvinsky's idea \cite{Yuz} of construction of square identities
out of a twisted group algebra.

%%%%%%%%%%%%%%%%%%%%%%%%%%%%%%%%%
\subsection{Definition and examples of twisted group algebras}
%%%%%%%%%%%%%%%%%%%%%%%%%%%%%%%%%

Consider the group algebra $\R\left[(\Z/2\Z)^n\right]$ over the
abelian group $\Zn$.
Denote by $u_x$ the natural basis vectors indexed by elements of the group $x\in\Zn$.
One then has:
$$
\R\left[(\Z/2\Z)^n\right]=\bigoplus_{x\in \Zn }\R\, u_x.
$$
Given a function $f:\Zn\times \Zn\to\Z/2\Z$, satisfying $f(x,0)=f(0,x)$ for all $x\in \Zn$, 
one defines a twisted product on $\R\left[(\Z/2\Z)^n\right]$
$$
u_x\cdot{}u_y=\left(-1\right)^{f(x,y)}
u_{x+y},
$$
for all $x,y\in\left(\Z/2\Z\right)^n$.
The defined algebra is called
a \textit{twisted group algebra} over $(\Z/2\Z)^n$
and denote by $(\R\left[(\Z/2\Z)^n\right],f)$.
The basis vector $u_0$ is a unit in $(\R\left[(\Z/2\Z)^n\right],f)$.

\begin{ex}
{\rm
Recall that the Clifford algebra $\Cl_{0,n}$
is the associative algebra with $n$ generators
$\g_1,\ldots,\g_n$ and relations
$$
\g_i^2=-1,
\qquad
\g_i\g_j=-\g_j\g_i.
$$
Write elements of $\Zn$ in the form
$x=(x_1,\ldots,x_n)$, where $x_i=0,1$, and choose the function
\begin{equation}
\label{Cliffunet}
f_{\Cl}(x,y)=\sum_{1\leq{}i \leq{}j\leq{}n}x_iy_j.
\end{equation}
It turns out that the twisted group algebra $(\R\left[(\Z/2\Z)^n\right],f_{\Cl})$ is isomorphic to
the Clifford algebra $\Cl_{0,n}$, see \cite{AM} for the details.
For instance, if $n=2$, then this is nothing but the algebra of quaternions $\bbH$.
}
\end{ex}

The algebra $(\R\left[(\Z/2\Z)^n\right],f)$ is associative if and only if
$f$ is a 2-cocycle on the group~$\Zn$, i.e.,
$$
f(y,z)+f(x+y,z)+f(x,y+z)+f(x,y)=0,
$$
for all $x,y,z\in\Zn$.
This is obviously the case for the bilinear function (\ref{Cliffunet}).

The following example realizes the classical non-associative
algebra of octonions $\bbO$ as a twisted group algebra
with a cubic twisting function~$f$ (that is not a 2-cocycle).
This result was found in \cite{AM1}.

\begin{ex}
\label{ExO}
{\rm
The twisted group algebra $(\R\left[(\Z/2\Z)^3\right],f)$ defined by the function
$$
f(x,y)=
x_1x_2y_3+x_1y_2x_3+y_1x_2x_3+
\sum_{1\leq{}i\leq{}j\leq3}\,x_iy_j
$$
is isomorphic to the algebra of octonions $\bbO$.
}
\end{ex}

%%%%%%%%%%%%%%%%%%%%%%%%%%%%%%%%%
\subsection{Multiplicativity criterion}\label{MultPaS}
%%%%%%%%%%%%%%%%%%%%%%%%%%%%%%%%%

In this subsection, we explain the Yuzvinsky original idea (see~\cite{Yuz} and also \cite{Yuz2}).

Let us consider a twisted group algebra $(\R\left[(\Z/2\Z)^n\right],f)$.
We define the Euclidean norm:
\begin{equation}
\label{EuclEq}
\cN(a)=\sum_{x\in(\Z/2\Z)^n}a_x^2,
\end{equation}
for $a=\sum_{x\in \Zn}a_x\,u_x$, where $a_x\in \R$.

Recall that a composition algebra is a normed algebra such that, for any two elements $a$ and $b$, 
\begin{equation}
\label{NormProd}
\cN(a)\,\cN(b)=\cN(a\cdot b).
\end{equation}

It is well-known (and follows for instance from Hurwitz's theorem)
that there are no composition algebras of dimension greater than $8$.
Therefore, if $n\geq4$, then the condition (\ref{NormProd}) cannot be satisfied for
arbitrary elements $a$ and~$b$
of the twisted group algebra $(\R\left[(\Z/2\Z)^n\right],f)$.
In order to find square identities,
we will look for subsets $A,B$ of $\Zn$, such that the condition~(\ref{NormProd}) 
is satisfied for 
$$
a\in\Span_\R(u_x,\,x\in{}A),
\qquad
b\in\Span_\R(u_x,\,x\in{}B).
$$

\begin{defi}
{\rm
Given two subsets $A,B\subset\Zn$, we say that $(A,B)$ is a 
\textit{multiplicative pair} if the condition (\ref{NormProd}) holds for any two elements of the form
$$
a=\sum_{x\in A}a_xu_x,
\qquad
b=\sum_{y\in B}b_yu_y.
$$
}
\end{defi}

Note that this definition depends on the twisting function $f$.
Moreover, existence of multiplicative pairs is an important property of $f$.

Recall that the \textit{sumset}, $A+B$, of subsets $A$ and $B$ 
in $(\Z/2\Z)^n$ is  the set formed by pairwise sums of the elements
of $A$ and~$B$, i.e. 
$$
A+B=\{a+b \;|\, a\in A,\, b\in B\}.
$$
\goodbreak

\begin{prop}
\label{SumProp}
If $(A,B)$ is a multiplicative pair, then there exists a square identity of size
$$
[r,s,N]=[\card{}(A),\,\card{}(B),\,\card{}(A+B)],
$$
which can be written explicitly as
$$
\Big(\sum_{x\in A}a_x^2\Big) \Big(\sum_{y\in B}b_y^2\Big)=\Big(\sum_{z\in A+B}c_z^2\Big)  
$$
where $$c_{z}=\sum\limits_{\substack{ (x,y)\in A\times B\\ x+y=z}}(-1)^{f(x,y)}\,a_x\,b_y.$$
\end{prop}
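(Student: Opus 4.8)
The plan is to prove the statement by a direct computation of the product $a\cdot b$ in the twisted group algebra, followed by an application of the multiplicativity hypothesis. I would begin by expanding $a\cdot b$ using bilinearity of the product together with the defining rule $u_x\cdot u_y=(-1)^{f(x,y)}u_{x+y}$:
$$
a\cdot b=\Big(\sum_{x\in A}a_xu_x\Big)\cdot\Big(\sum_{y\in B}b_yu_y\Big)
=\sum_{x\in A,\,y\in B}(-1)^{f(x,y)}\,a_xb_y\,u_{x+y}.
$$
Next I would collect the terms according to the value $z=x+y$. Since $u_{x+y}=u_z$ precisely when $x+y=z$, grouping yields
$$
a\cdot b=\sum_{z\in A+B}c_z\,u_z,
\qquad
c_z=\sum_{\substack{(x,y)\in A\times B\\ x+y=z}}(-1)^{f(x,y)}\,a_xb_y,
$$
which is exactly the formula for $c_z$ in the statement; the outer sum ranges over $z\in A+B$ because $c_z=0$ whenever $z\notin A+B$.

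I would then apply the Euclidean norm \eqref{EuclEq}. By definition $\cN(a\cdot b)=\sum_z c_z^2=\sum_{z\in A+B}c_z^2$, while $\cN(a)=\sum_{x\in A}a_x^2$ and $\cN(b)=\sum_{y\in B}b_y^2$. The hypothesis that $(A,B)$ is a multiplicative pair means precisely that $\cN(a)\,\cN(b)=\cN(a\cdot b)$ for all such $a,b$, and substituting the three expressions above gives
$$
\Big(\sum_{x\in A}a_x^2\Big)\Big(\sum_{y\in B}b_y^2\Big)=\sum_{z\in A+B}c_z^2 .
$$
Each $c_z$ is, by construction, a bilinear form in the $a_x$ and the $b_y$ whose coefficients lie in $\{-1,0,1\}\subset\Z$, since for fixed $z$ the distinct pairs $(x,y)$ give distinct monomials $a_xb_y$, each carrying a sign $(-1)^{f(x,y)}$. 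As there are $\card(A)$ variables $a_x$, $\card(B)$ variables $b_y$, and $\card(A+B)$ forms $c_z$, this is a square identity of the claimed size $[r,s,N]=[\card(A),\card(B),\card(A+B)]$.

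The single point requiring care — the only (mild) obstacle — is the logical status of the last equality. The multiplicativity hypothesis delivers equality of the two sides viewed as functions of the real coefficients $a_x,b_y$, whereas a square identity in the sense of \eqref{rsN} is a \emph{formal} polynomial identity in the indeterminates. To bridge this gap I would note that both sides are polynomials in finitely many variables over $\R$ that agree on all real inputs, and that two such polynomials over the infinite field $\R$ necessarily coincide as polynomials. This upgrades the numerical identity to the required formal one and completes the proof.
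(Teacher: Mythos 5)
Your proof is correct and is exactly the computation the paper has in mind: the authors dispose of this proposition in one line (``an immediate consequence of the property \eqref{NormProd}''), and your expansion of $a\cdot b$, the grouping by $z=x+y$, and the application of the multiplicativity hypothesis simply make that immediacy explicit. The closing remark on passing from a numerical identity over $\R$ to a formal polynomial identity is a reasonable extra precaution, though the paper does not bother with it.
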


\begin{proof}
This is an immediate consequence of the property \eqref{NormProd}.
\end{proof}

The following important statement is proved in \cite{Yuz}.
It provides a criterion for subsets $A$ and $B$ to
be a multiplicative pair and, therefore, it also proves the existence of
corresponding square identities.
The proof is elementary and we give it here for the sake of completeness.

\begin{prop}
\label{propmult}
Let $A$ and $B$ be two subsets of $\Zn$. The following two conditions are equivalent
\begin{enumerate}
\item[(i)]
The pair $(A,B)$ is multiplicative.
\item[(ii)] 
For all $x\not=z\in A$ and $y\not=t\in B$ such that
$x+y+z+t=0$, one has 
\begin{equation}
\label{fcond}
f(x,y)+f(z,t)+f(x,t)+f(z,y)=1.
\end{equation}
\end{enumerate}
\end{prop}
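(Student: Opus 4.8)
The plan is to reduce the identity \eqref{NormProd} to a polynomial identity in the independent indeterminates $a_x$ ($x\in A$) and $b_y$ ($y\in B$), and then to compare coefficients.

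First I would expand both sides of \eqref{NormProd}. The left-hand side is simply $\cN(a)\,\cN(b)=\sum_{x\in A,\,y\in B}a_x^2\,b_y^2$. For the right-hand side, collecting the product $a\cdot b$ by the group element $z=x+y$ produces the coefficients $c_z$ of Proposition \ref{SumProp}, and squaring gives
$$
\cN(a\cdot b)=\sum_{\substack{(x,y),(x',y')\in A\times B\\ x+y=x'+y'}}(-1)^{f(x,y)+f(x',y')}\,a_x\,a_{x'}\,b_y\,b_{y'}.
$$
Next I would split this double sum into its diagonal part, where $(x,y)=(x',y')$, and the remaining off-diagonal part. The diagonal part equals $\sum a_x^2\,b_y^2=\cN(a)\,\cN(b)$, so condition (i) is equivalent to the vanishing of the off-diagonal sum for all real values of the coefficients. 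A quick check shows that in every off-diagonal term one has both $x\neq x'$ and $y\neq y'$, since over $\Zn$ the relation $x+y=x'+y'$ forces $x=x'\iff y=y'$. Writing $z=x'$ and $t=y'$, the off-diagonal sum is therefore supported precisely on quadruples $x\neq z\in A$, $y\neq t\in B$ with $x+y+z+t=0$.

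The step that requires the most care is the combinatorial regrouping of the off-diagonal terms by the monomial $a_x\,a_z\,b_y\,b_t$ they carry. For a fixed unordered pair $\{x,z\}\subset A$ and $\{y,t\}\subset B$ with $x+y+z+t=0$, the constraint $x+y=x'+y'$ is met by exactly the two pairings $\{(x,y),(z,t)\}$ and $\{(x,t),(z,y)\}$ (one checks that $x+y=z+t$ is equivalent to $x+t=z+y$ over $\Z/2\Z$), and each pairing occurs twice by the symmetry of the ordered sum. The coefficient of $a_x\,a_z\,b_y\,b_t$ is thus
$$
2\big[(-1)^{f(x,y)+f(z,t)}+(-1)^{f(x,t)+f(z,y)}\big],
$$
and one verifies that this value is independent of the chosen labels. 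Since the monomials $a_x\,a_z\,b_y\,b_t$ attached to distinct pairs $(\{x,z\},\{y,t\})$ are distinct, hence linearly independent over the infinite field $\R$, the off-diagonal polynomial vanishes for all real arguments if and only if each such coefficient vanishes.

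Finally, $(-1)^{f(x,y)+f(z,t)}+(-1)^{f(x,t)+f(z,y)}=0$ holds exactly when the two exponents have opposite parity, that is, when $f(x,y)+f(z,t)+f(x,t)+f(z,y)=1$ in $\Z/2\Z$, which is precisely \eqref{fcond}. This yields both implications simultaneously. The only genuinely delicate point is the bookkeeping in the regrouping step: one must confirm that each admissible monomial receives contributions from exactly the two pairings above and that its coefficient is well defined, after which the equivalence follows from comparing coefficients.
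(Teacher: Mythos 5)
Your proof is correct and follows essentially the same route as the paper's own argument: expand $\cN(a\cdot b)$ over quadruples with $x+y+z+t=0$, observe that the diagonal part reproduces $\cN(a)\,\cN(b)$, and note that the off-diagonal terms cancel for all real coefficients exactly when the two signs attached to each monomial $a_x\,a_z\,b_y\,b_t$ are opposite, which is condition \eqref{fcond}. The paper states this more tersely; your careful bookkeeping of the regrouping and the linear-independence argument over $\R$ is just a fuller writing-out of the same idea.
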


\begin{proof}
The product of the Euclidean norms is obviously
$
\cN(a)\,\cN(b)=\sum_{x,y}\,a_x^2\,b_y^2.
$
On the other hand, one obtains:
$$
\cN(a\cdot{}b)=
\sum_{x+y+z+t=0}(-1)^{f(x,y)+f(z,t)}\,a_x\,b_y\,a_z\,b_t.
$$
It follows that the condition (\ref{NormProd}) is satisfied if and only if
the terms $a_x\,b_y\,a_z\,b_t$ and $a_x\,b_t\,a_z\,b_y$
cancel whenever $(x,y)\not=(z,t)$ and $a_x\,b_y\,a_z\,b_t\not=0$,
in other words, the corresponding signs are opposite.
Hence the result.
\end{proof}

Our next goal is to find a twisted group algebra $(\R\left[(\Z/2\Z)^n\right],f)$
that admits multiplicative pairs of large cardinality. 

%%%%%%%%%%%%%%%%%%%%%%%%%%%%%%%%%
%%%%%%%%%%%%%%%%%%%%%%%%%%%%%%%%%
\section{The algebras $\bbO_n$}\label{NeshAlgSec}
%%%%%%%%%%%%%%%%%%%%%%%%%%%%%%%%%
%%%%%%%%%%%%%%%%%%%%%%%%%%%%%%%%%

%The series of algebras $\bbO_n$ was introduced in \cite{MGO}.
%In this section, we recall their definition and the characterizing properties.
In this section, we apply the results of Section \ref{TwiS} to a particular series of twisted group algebras.
%%%%%%%%%%%%%%%%%%%%%%%%%%%%%%%%%
\subsection{Definitions}
%%%%%%%%%%%%%%%%%%%%%%%%%%%%%%%%%

We recall the definition and the main properties of the algebras $\bbO_n$
generalizing the algebra of octonions.
This section presents a brief account, the details can be found in \cite{MGO}.

\begin{defi}
{\rm
The algebra $\bbO_n$ is the twisted group algebra
defined by the function
\begin{equation}
\label{NashProd}
f_\bbO(x,y)=\sum_{1\leq{}i<j<k\leq{}n}
\left(
x_ix_jy_k+x_iy_jx_k+y_ix_jx_k
\right)+
\sum_{1\leq{}i\leq{}j\leq{}n}\,x_iy_j,
\end{equation}
for all $x=(x_1, \ldots, x_n)$ and $y=(y_1, \ldots, y_n)$,  elements of $\Zn$.
}
\end{defi}
Note that $\bbO_3$ is exactly the algebra of octonions $\bbO$, cf. Example \ref{ExO}.

The main property of the algebra $\bbO_n$ is that its structure is determined
(up to isomorphism) by 
a single function in one variable $$\a:\Zn \rightarrow\Z/2\Z$$
that we call the \textit{generating function}.

\begin{defi}
{\rm
Given a twisted group algebra $(\R\left[(\Z/2\Z)^n\right],f)$, we say that
this algebra has a generating function if there exists a function
$
\a:\Zn \rightarrow \Z/2\Z
$
such that
\begin{enumerate}
\item[(i)] 
For all $x,y\in \Zn$, the following equation is satisfied:
$$
f(x,y)+f(y,x)=\a(x+y)+\a(x)+\a(y),
$$
\item[(ii)] 
For all $x,y,z\in \Zn$,
\begin{equation*}
\label{Genalp2}
\begin{array}{rl}
&f(y,z)+f(x+y,z)+f(x,y+z)+f(x,y)\\[4pt]
&=\a(x+y+z)+\a(x+y)+\a(x+z)+\a(y+z)+\a(x)+\a(y)+\a(z).
\end{array}
\end{equation*}

\end{enumerate}
}
\end{defi}

\begin{rem}
{\rm
The above conditions (i) and (ii) have cohomological meaning.
In particular the expression on the right-hand side of (i)
and the left-hand side of (ii) are the differentials
$\d\a$ and $\d{}f$, respectively.
The idea of existence of a generating function
goes back to Eilenberg and MacLane (see \cite{Mac});
this idea is also crucial for the theory of error-correcting codes and code loops
(see \cite{Gri}).
}
\end{rem}

One can immediately check the following statement.

\begin{prop} \cite{MGO}
\label{propalpha}
(i)
The function
$$
\a_{\bbO}(x):=f_{\bbO}(x,x),
$$
for $x\in \Zn$, is a generating function for the algebra $\bbO_n$.

(ii) 
For $f=f_{\bbO}$ and $x+y+z+t=0$, one has
\begin{equation}
\label{GenFuncMult}
f_{\bbO}(x,y)+f_{\bbO}(z,t)+f_{\bbO}(x,t)+f_{\bbO}(z,y)
=\a_{\bbO}(x+z).
\end{equation}

(iii)
The function $\a_{\bbO}(x)$ depends only on the weight of the element $x$,
i.e., on the integer
$$
|x|=\sum_{1\leq i \leq n} x_i.
$$
\end{prop}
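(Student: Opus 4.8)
The plan is to verify the three parts of Proposition \ref{propalpha} by direct computation with the explicit cubic-plus-quadratic formula \eqref{NashProd}, exploiting the fact that we work over $\Z/2\Z$ where signs disappear and squares of $\{0,1\}$-valued coordinates satisfy $x_i^2=x_i$.

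\medskip

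\textbf{Part (i).} First I would compute $\a_{\bbO}(x)=f_{\bbO}(x,x)$ explicitly. Setting $y=x$ in \eqref{NashProd}, the cubic part becomes $\sum_{i<j<k}3\,x_ix_jx_k\equiv\sum_{i<j<k}x_ix_jx_k$ and the quadratic part becomes $\sum_{i\leq j}x_ix_j$, all mod $2$. To prove that $\a_{\bbO}$ is a generating function, I must check conditions (i) and (ii) of the definition. For (i), I would expand $f_{\bbO}(x,y)+f_{\bbO}(y,x)$ and compare with $\a_\bbO(x+y)+\a_\bbO(x)+\a_\bbO(y)$. The key mechanical point is that swapping $x\leftrightarrow y$ in a term like $x_ix_jy_k$ produces $y_iy_jx_k$, so the symmetrized cubic sum collects precisely the terms that, mod $2$, match the cross-terms appearing in the polarization $\a_\bbO(x+y)+\a_\bbO(x)+\a_\bbO(y)$. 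For condition (ii), both sides are coboundary-type expressions; I would compute the left-hand side $\delta f_\bbO$ directly and verify it equals the fully symmetrized difference $\delta\a_\bbO$ on the right. This is a longer but entirely routine expansion, best organized by treating the cubic and quadratic parts of $f_\bbO$ separately.

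\medskip

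\textbf{Part (ii).} Here I would use Proposition \ref{propmult}'s algebraic shape together with the just-established generating-function property. The cleanest route is to substitute $t=x+y+z$ (forced by $x+y+z+t=0$ over $\Z/2\Z$) into the left side of \eqref{GenFuncMult} and reduce. Alternatively, and more conceptually, conditions (i) and (ii) of the generating-function definition together let one rewrite any combination $f(x,y)+f(z,t)+f(x,t)+f(z,y)$ with $x+y+z+t=0$ in terms of $\a$ alone; carrying out that bookkeeping should collapse the four $\a$-values down to the single term $\a_\bbO(x+z)$, with all other contributions cancelling in pairs mod $2$. I would verify the telescoping carefully, since this is where the identity $x+y+z+t=0$ does the essential work.

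\medskip

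\textbf{Part (iii).} Finally I would show $\a_\bbO(x)$ depends only on $|x|$. Writing $w=|x|=\sum x_i$, the quadratic part $\sum_{i\leq j}x_ix_j=\sum_i x_i+\sum_{i<j}x_ix_j$ equals $w+\binom{w}{2}$ since exactly $\binom{w}{2}$ unordered pairs of support indices contribute, and the cubic part $\sum_{i<j<k}x_ix_jx_k$ equals $\binom{w}{3}$ by the same counting. Hence $\a_\bbO(x)=\binom{w}{3}+\binom{w}{2}+w \pmod 2$, a function of $w$ alone. \emph{The main obstacle} is purely organizational rather than conceptual: the combinatorial expansions in parts (i) and (ii) generate many index-restricted sums that must be regrouped correctly mod $2$, and the chief risk is a sign- or multiplicity-bookkeeping slip when symmetrizing the cubic terms. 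I expect part (ii) to require the most care, as it is where the constraint $x+y+z+t=0$ must be invoked to force the cancellations.
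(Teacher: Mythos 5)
The paper offers no proof of this proposition at all --- it is imported from \cite{MGO} with the remark that ``one can immediately check'' it --- and your direct-verification plan is exactly that check, carried out correctly: the closed form $\a_{\bbO}(x)\equiv\binom{|x|}{3}+\binom{|x|}{2}+|x| \pmod 2$ in part (iii) is right and reproduces the stated $4$-periodicity, and the symmetrization/polarization bookkeeping you describe for the generating-function axioms does go through. Both of your proposed routes to part (ii) also work --- the direct substitution $t=x+y+z$ into \eqref{NashProd}, or the axiomatic route, which reduces \eqref{GenFuncMult} to the auxiliary identity $f(u,v)+f(u,u+v)=\a_{\bbO}(u)$ obtainable from condition (ii) applied to the triples $(u,z,y)$, $(u,z,t)$ and $(v,u,u)$ together with $f(x,0)=0$.
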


\noindent
Note that equation \eqref{GenFuncMult} is equivalent to  (\ref{fcond}).

The algebra $\bbO_n$ is the unique twisted group algebra
over $\Zn$ that admits a generating function
$\a_{\bbO}$ which is invariant under the action
of the group of permutations~$\gS_n$ on $\Zn$,
i.e., the action permuting the coordinates of $x$.

By (iii) of Proposition \ref{propalpha}, the value $\a_{\bbO}(x)$ only depends on the weight $|x|$. 
It turns out that $\a_{\bbO}$ is 4-periodic. 

\begin{equation*}
\label{GenFuncTab}
\a_{\bbO}(x)=\left\{
\begin{array}{ll}
0\;,& \text{ if } \;|x|\equiv 0\mod 4,\\[4pt]
1\;,&  \text{ otherwise.} 
\end{array}
\right.
\end{equation*}

%%%%%%%%%%%%%%%%%%%%%%%%%%%%%%%%%
\subsection{The Hurwitzian sets}\label{HurSecT}
%%%%%%%%%%%%%%%%%%%%%%%%%%%%%%%%%

We use the function $\a_{\bbO}$ to give a combinatorial  characterization 
of multiplicative pairs for the
algebra~$\bbO_n$.

\begin{lem}
A pair $(A,B)$ of subsets of $\Zn$ is
a multiplicative pair if and only if the following condition is satisfied.
If
$$
w\in\left(A+A
\right)\cap\left(B+B
\right)
$$
and $w\not=0$, then $|w|\not\equiv0\,(\!\!\mod4)$.
\end{lem}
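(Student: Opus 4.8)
The plan is to chain together the two preceding propositions and then read off the answer from the explicit form of $\a_\bbO$. By Proposition \ref{propmult}, the pair $(A,B)$ is multiplicative precisely when the sign condition \eqref{fcond} holds for every $x\neq z$ in $A$ and $y\neq t$ in $B$ satisfying $x+y+z+t=0$. Now, as noted just after Proposition \ref{propalpha}, equation \eqref{GenFuncMult} identifies the left-hand side of \eqref{fcond} with $\a_\bbO(x+z)$ whenever $x+y+z+t=0$. Hence $(A,B)$ is multiplicative if and only if $\a_\bbO(x+z)=1$ for every such quadruple $(x,y,z,t)$.

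Next I would reformulate the set over which this quantifier ranges. Working in characteristic two, the relation $x+y+z+t=0$ is equivalent to $x+z=y+t$; put $w:=x+z=y+t$. Since $x\neq z$ in $A$, the element $w$ is a nonzero element of $A+A$, and since $y\neq t$ in $B$, it is also an element of $B+B$; thus $w\in(A+A)\cap(B+B)$ with $w\neq 0$. Conversely, any nonzero $w\in(A+A)\cap(B+B)$ can be written both as $w=x+z$ with $x\neq z$ in $A$ and as $w=y+t$ with $y\neq t$ in $B$, and these four elements then satisfy $x+y+z+t=w+w=0$. So the admissible quadruples correspond exactly to nonzero elements of $(A+A)\cap(B+B)$.

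The key observation making this passage rigorous is that $\a_\bbO(x+z)=\a_\bbO(w)$ depends only on $w$, and not on the particular way $w$ is decomposed as a sum of two elements of $A$ (or of $B$). Consequently, even though a single $w$ may admit many such decompositions, the condition ``$\a_\bbO=1$ on all quadruples'' collapses to the condition ``$\a_\bbO(w)=1$ for every nonzero $w\in(A+A)\cap(B+B)$.'' Finally I would invoke the explicit description of $\a_\bbO$ recorded after Proposition \ref{propalpha}: the value $\a_\bbO(w)=1$ holds if and only if $|w|\not\equiv 0 \pmod 4$. Substituting this gives precisely the stated criterion.

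The only genuinely delicate point is the bookkeeping in this quantifier translation: one must check that no constraint is lost when replacing the family of quadruples by the set of nonzero elements of $(A+A)\cap(B+B)$, despite the many-to-one nature of the decomposition $w=x+z$. Because $\a_\bbO$ factors through the weight $|\cdot|$, this is immediate, and everything else in the argument is a direct substitution using \eqref{GenFuncMult} and the weight table for $\a_\bbO$.
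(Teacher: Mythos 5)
Your proposal is correct and follows essentially the same route as the paper: both chain Proposition \ref{propmult} with equation \eqref{GenFuncMult} to reduce multiplicativity to the condition $\a_{\bbO}(w)=1$ for nonzero $w\in(A+A)\cap(B+B)$, and then invoke the explicit $4$-periodic description of $\a_{\bbO}$. Your version is merely more explicit about the quantifier translation between quadruples $(x,y,z,t)$ and nonzero elements $w$ of the intersection, which the paper treats as immediate.
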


\begin{proof}
The condition $0\not=w\in\left(A+A\right)$ means $w=x+z$
for some elements $x,z\in A$ such that $x\not=z$.
Similarly, $w=y+t$, where $y, t\in B$.
By \eqref{GenFuncMult}  and \eqref{fcond}, multiplicativity of a pair $(A,B)$  is equivalent to
\begin{equation}
\label{GenFuncFor}
\a_{\bbO}(x+z)=\a_{\bbO}(y+t)=1,
\end{equation}
for all $x,z\in A$ and $y,t\in B$ such that $x+z=y+t$.
Since $\a_{\bbO}$ is 0 only for elements of weight multiple of 4, the lemma now follows.
\end{proof}

\begin{defi}
{\rm
A subset $H\subset\Zn$ is called a \textit{Hurwitzian set} if
$H$ forms a multiplicative pair with $\Zn$, and $\card(H)$ is the greatest possible with
this condition.
}
\end{defi}
Let us fix the following notation for
some particular elements of $\Zn$:
 \begin{equation*}
\begin{array}{rcl}
e_0&:=& (0,0,\ldots,0),\\[4pt]
\overline{e_0}&:=& (1,1,\ldots,1),\\[4pt]
e_i&:=&(0,\ldots,0,1,0,\ldots,0), \text{ where 1 occurs at the $i$-th position,}\\[4pt]
\overline{e_i}&:=&(1,\ldots,1,0,1,\ldots,1), \text{ where 0 occurs at the $i$-th position}.
\end{array}
\end{equation*}

The following table provides a possible choice of
Hurwitzian sets and the size of the identities obtained 
by applying Proposition \ref{SumProp}, depending on the class of $n\mod4$.
\begin{equation}
\label{SetH}
 \setlength{\extrarowheight}{3pt}
 \begin{array}{c|l|l|l|l|}
&\text{Hurwitzian set}& \text{Size of identity}\\[6pt]
\hline
n\equiv 0 \mod 4& H=\{e_i, e_1+e_i\}&[2n, 2^n, 2^n]\\[6pt]
\hline 
n\equiv 1 \mod 4& H=\{ e_i, \overline{e_i}\} &[2n, 2^n, 2^n] \\[6pt]
\hline
n\equiv 2 \mod 4& H=\{e_i, e_1+e_i\}&[2n, 2^n, 2^n]\\[6pt]
\hline
n\equiv 3 \mod 4
&H=\{e_0, \overline{e_0},\, e_i, \overline{e_i}\}&[2n+2, 2^n, 2^n]\\[6pt]
\hline
\end{array}
\end{equation}
where $1\leq i \leq n$.
Indeed, it is easy to check that $|x+z|$ is never a multiple of 4, 
when~$x,z\in{}H$.
Therefore the condition (\ref{GenFuncFor}) is always satisfied.

\begin{rem}
{\rm
 For $n=1,2$ or $3\mod4$, the triples in the table are the
optimal triples given by Hurwitz-Radon theorem. When $n\equiv0\mod4$, 
we do not achieve the optimal $[2n+1, 2^n, 2^n]$. 
This is related to the fact that for $n\equiv0\mod4$, the algebra~$\bbO_n$ is not simple.
}
\end{rem}

%%%%%%%%%%%%%%%%%%%%%%%%%%%%%%%%%
%%%%%%%%%%%%%%%%%%%%%%%%%%%%%%%%%
\section{Generalized  Hurwitz-Radon identities}\label{proofSec}
%%%%%%%%%%%%%%%%%%%%%%%%%%%%%%%%%
%%%%%%%%%%%%%%%%%%%%%%%%%%%%%%%%%

In this section, we prove Theorems \ref{FirstMain} and \ref{SecMain}. The proofs are based on 
Proposition \ref{SumProp}.
The idea is to start with a multiplicative
pair $(H,B)$, where $B=\Zn$, and reduce the size of $H+B$ by removing elements from $B$.
We first give some examples. 

%%%%%%%%%%%%%%%%%%%%%%%%%%%%%%%%%
\subsection{The simplest examples}\label{SimExSec}
%%%%%%%%%%%%%%%%%%%%%%%%%%%%%%%%%

Let $H$ be a Hurwitzian set as in table \eqref{SetH}.

1) For $B=\Zn\setminus{}H$,
the sumsets are as follows:
$$
H+B=\Zn \setminus  \{e_0, e_1\},
\qquad
H+B=\Zn \setminus  \{e_0, \bar{e}_0\},
$$
for $n\equiv0,2\mod4$ and $n\equiv1,3\mod4$, respectively.
This leads to the formula \eqref{Angel}.
Furthermore, in the case  $n\equiv 3 \mod 4$, one obtains an identity of size
$$
[2n+2\;,\; 2^n-2n-2\;,\;  2^n-2].
$$

2) Assume $n$ is odd and let
$$
B=\Zn\setminus \left\{x\;,\; |x|=1,\,3,\,n-3,\,n-1 \right\}.
$$ 
The sumset is then of the form:
$$
H+B=\Zn\setminus \left\{x\;,\; |x|=0,\,2,\,n-2,\,n \right\}.
$$ 
This leads to identities of size
$$
\left[2n\;,\;2^n-2 \Big(n+ {n\choose 3}\Big) \;, \; 2^n- 2 {n\choose 2}-2\right],
$$
which corresponds to the second ``border case'' in Theorem \ref{FirstMain},
namely to formula (\ref{step}) with $(\ell,k)=(n-1,n)$.

3) Again, let $n$ be odd and write $n=2m+1$.
Set  $B=\left\{x\;,\; |x|=m,\,m+1\right\}.$ 
Then 
$H+B=\left\{x\;,\; |x|=m-1,\, m,\,m+1,\,m+2 \right\}$,
see Figure \ref{FirstF}. 
This leads to identities of size
$$
\left[ 2n\; ,\; 2{n\choose m}\; ,\; 2{{n+1}\choose m} \right]\;,\qquad
\left[ 2n+2\; ,\; 2{n\choose m}\; ,\; 2{{n+1}\choose m} \right],
$$
if $n\equiv 1 \mod 4$ and   $n\equiv 3 \mod 4$, respectively.
This is Theorem \ref{SecMain}, in the particular case $k=1$.
\begin{figure}[hbtp]
\includegraphics[width=9cm]{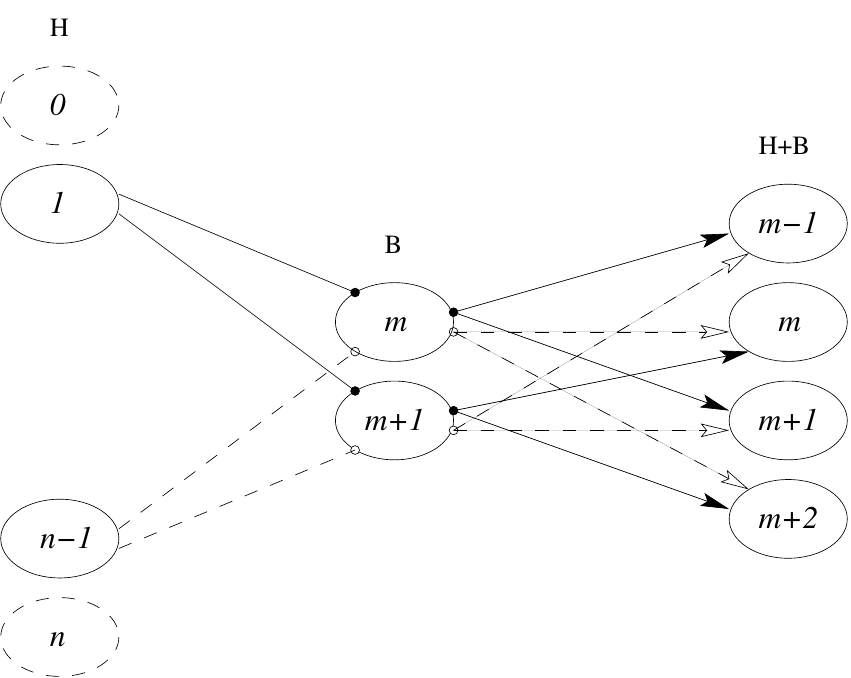}
\caption{Sumset $H+B$ for $n=2m+1$
and $B$ consisting of elements of weight $m$ and $m+1$.}
\label{FirstF}
\end{figure}

%%%%%%%%%%%%%%%%%%%%%%%%%%%%%%%%%
\subsection{Proof of Theorem \ref{FirstMain}} 
%%%%%%%%%%%%%%%%%%%%%%%%%%%%%%%%%

In Section \ref{SimExSec}, part 2), we removed all the elements of weight~2 and $(n-2)$. 
We will now remove them one after another.

Consider first the case when $n$ is odd.

Set $e_{ij}:=e_i+e_j$ and 
$\overline{e_{ij}}:=\overline{e_0}+e_{ij}$.
In order to eliminate two elements $e_{ij}$ and $\overline{e_{ij}}$ from  $H+B$,
we have to remove from $B$ the following set:
$$
H\cup
\left\{H+e_{ij}\right\}
\cup
\left\{H+\overline{e_{ij}}\right\}.
$$
Notice however, that $\left\{H+e_{ij}\right\}$ and $\left\{H+\overline{e_{ij}}\right\}$ coincide
and their intersection with $H$ is $\{e_i,e_j,\overline{e_{i}},\overline{e_{j}}\}$.
Thus the cardinality of the removed set is $4n-4$.

Finally, we fix $(\ell,k)$ with $\ell<k$ and let $B$ be the following set 
$$
\displaystyle
B=\Zn\setminus\Big(
H\cup\bigcup_{(i,j)\prec(\ell,k)}
\left\{H+e_{ij}\right\}\Big),
$$
where $\prec$ is the lexicographical order. 
It can be easily proved by induction that the removed 
set is of cardinality
$$
\big((k-1)(k-2)+2\ell+2\big)n -\frac{2}{3}\,(k-2)(k-1)k+2\,k\ell.
$$

The corresponding sumset is then of the form
$$
H+B=
\Zn\setminus
\{e_0,\,\overline{e_{0}},\,e_{ij}\,\overline{e_{ij}}\;,\;(i,j)\prec (\ell,k)\}.
$$
Its cardinality is
$$
\card(H+B)=2^n-(k-1)(k-2)-2\,\ell-2.
$$
We now apply Proposition  \ref{SumProp} to obtain identities (\ref{step}) for an odd $n$.

In the case $n$ is even, the proof is similar with $e_1$ playing the role of $\overline{e_{0}}$.

Theorem \ref{FirstMain} is proved.

\begin{figure}[hbtp]
\includegraphics[width=7.3cm]{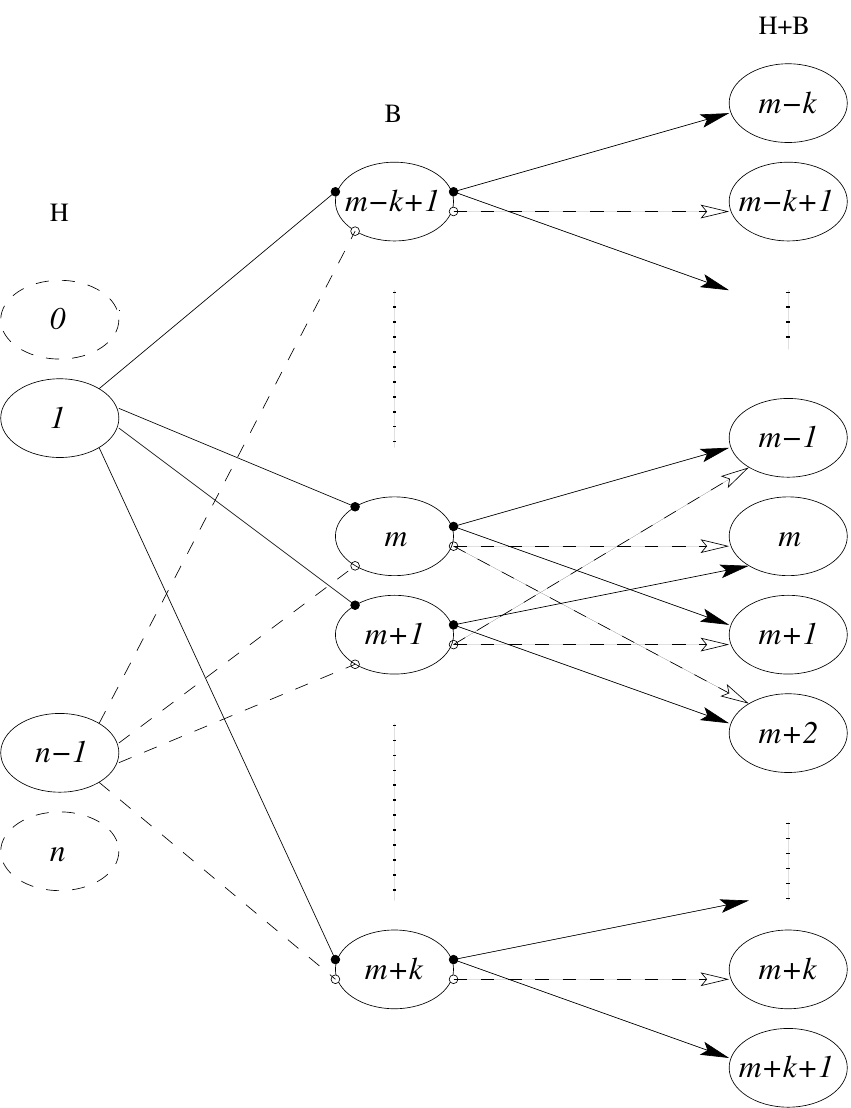}
%\end{center}
\caption{Sumset $H+B$ for $n=2m+1$
and $B$ consisting of elements of weight between $(m-k+1)$ and $(m+k)$.}
\label{5and6}
\end{figure}

%%%%%%%%%%%%%%%%%%%%%%%%%%%%%%%%%
\subsection{Proof of Theorem \ref{SecMain}} 
%%%%%%%%%%%%%%%%%%%%%%%%%%%%%%%%%

We write $n=2m+1$. 
Fix $k$ such that $0\leq k \leq m$ and set 
$$
B=\left\{x\;,\; |x|=m-k,\,m-k+1,\,\dots,\,m,\,m+1,\,\dots,\,m+k+1\right\}.
$$ 
The sumset is as follows (see Figure \ref{5and6})
$$
H+B=\left\{x\;,\; |x|=m-k-1,\,m-k,\,\dots,\,m+k+1,\,m+k+2\right\}.
$$
By Proposition \ref{SumProp}, this leads to the formulas of Theorem \ref{SecMain}.

Theorem \ref{SecMain} is proved.

%%%%%%%%%%%%%%%%%%%%%%%%%%%%%%%%%
\subsection{The twisting function $f_{\bbO}$ and representations of the Clifford algebras}
%%%%%%%%%%%%%%%%%%%%%%%%%%%%%%%%%

Let us finally mention that the twisting function $f_{\bbO}$ that defines the algebras $\bbO_n$
allows one to construct representations of the Clifford algebras $\Cl_{0,r}$ explicitly.

For  $x\in \Zn$, we define a $(2^n\times 2^n)$-matrix $G_x$ 
whose coefficients are indexed by elements $(y,t)\in \Zn\times \Zn$ and are given by
\begin{equation*}
G_x(y,t)=\left\{
\begin{array}{ll}
(-1)^{f_\bbO(y+t,\,y)}, & \text{if } y+t=x,\\[6pt]
0,& \text{otherwise}.
\end{array}
\right.
\end{equation*}
Note that for $x=(0,\dots,0)$ one obtains $G_0=\Id$.

The matrices $G_x$ satisfy the following properties.
\begin{enumerate}
\item[(i)]
$G_x^2=(-1)^{f_\bbO(x,x)}\,\Id,
\qquad
G^T_x=G_x^{-1}\;=\;(-1)^{f_\bbO(x,x)}\,G_x.$
\medskip

\item[(ii)]
$G_xG_{x'}=-G_{x'}G_x$
if and only if
$
f_{\bbO}(x,x')+f_{\bbO}(x',x)=1.
$
\end{enumerate}
This leads to representations of Clifford algebras in $\R[2^n]$.
Indeed, if $A$ is a subset of $r$ elements of $\Zn$ such that $f_\bbO(x,x)=1$ and 
$f_{\bbO}(x,x')+f_{\bbO}(x',x)=1$,
for all $x,x'\in A$, 
then the matrices $G_x$, $x\in A$,
represent the real Clifford algebra $\Cl_{0,r}$ in the algebra
$\R[2^n]$.

\begin{ex}
We describe explicitly the irreducible representations of 
some of the simple real Clifford algebras.
We give a possible choice of sets $A$. 
The condition on the set $A$ 
can be expressed using the generating function $\a_{\bbO}$. 
We use the notation of Section \ref{HurSecT}.
\begin{enumerate}
\item[\textbullet] If $n\equiv 3 \mod 4$, an irreducible representation of $\Cl_{0,2n}=\R[2^n] $ is given by
$$
G_x, \;x\in \{e_i, \overline{e_i}, \; 1\leq i \leq n\}.
$$
\item[\textbullet] If $n\equiv 1,3 \mod 4$, an irreducible real representation of $\Cl_{0,2n-1}=\C[2^{n-1}] $ is given by
$$
G_x,\; x\in \{ e_i, e_1+e_j, \;1\leq i \leq n,\; 1<j \leq n\}.
$$

\item[\textbullet] If $n\equiv 2, 3 \mod 4$, an irreducible real representation of $\Cl_{0,2n-2}=\bbH[2^{n-2}] $ is given by 
$$
G_x,\; x\in \{ e_i, e_1+e_j, \;1\leq i \leq n,\; 1<j < n\}.
$$
\end{enumerate}
\end{ex}

\medskip

\noindent \textbf{Acknowledgments}.
The second author has benefited from the award of a
\textit{Leibniz Fellowship}
at the Mathematisches Forschungsinstitut Oberwolfach (MFO).
The third author is also grateful to MFO for hospitality.
We would like to thank C. Duval for helpful comments.
We are also deeply grateful to the anonymous referee for
a number of helpful remarks and suggestions.

%%%%%%%%%%%%%%%%%

\end{document}